\newtheorem{theorem}{Theorem}
\newtheorem{corollary}{Corollary}
\newtheorem{problem}{Problem}
\newtheorem{remark}{Remark}
\newenvironment{proof}[1][Proof]{\noindent\textbf{#1.} }{\ \rule{0.5em}{0.5em}}
\begin{document}

\title{The Noise Covariances of Linear Gaussian Systems with Unknown Inputs Are Not Uniquely Identifiable Using Autocovariance Least-squares}
\author{He~Kong,~Salah~Sukkarieh,~Travis J. Arnold,~Tianshi Chen,
and~Wei Xing Zheng \thanks{\textbf{This paper is accepted and going to be officially published in \emph{Systems \& Control Letters}.} Correspond author: He Kong.}\thanks{%
He Kong is with the Department of Mechanical and Energy Engineering, South University of Science and Technology, Shenzhen, 518055, China; email:
kongh@sustech.edu.cn. Salah Sukkariehb is with the Australian Centre for Field Robotics, University of Sydney, NSW 2006, Australia; email: salah.sukkarieh@sydney.edu.au. Travis J. Arnold is an independent researcher; email: travis.arnold17@gmail.com. Tianshi Chen is with the School of Data Science and Shenzhen Research Institute of Big Data, The Chinese University of Hong Kong, Shenzhen, 518172, China; email: tschen@cuhk.edu.cn. Wei Xing Zheng is with the School of Computer, Data and Mathematical Sciences, Western Sydney University, Sydney, NSW 2751, Australia; email: w.zheng@westernsydney.edu.au.}}
\maketitle

\begin{abstract}
Existing works in optimal filtering for linear Gaussian
systems with arbitrary
unknown inputs assume
perfect knowledge of the noise covariances
in the filter design. This is
impractical and raises the question of whether and under what conditions
one can identify the
noise covariances of linear Gaussian systems with
arbitrary unknown inputs. This paper considers the above identifiability
question using the correlation-based autocovariance least-squares (ALS)
approach. In particular, for the ALS framework, we prove that (i) the process noise covariance $Q$ and the measurement noise covariance $R$
cannot be uniquely jointly identified; (ii) neither $Q$ nor $R$ is uniquely identifiable, when the other is known. This not
only helps us to
have a better understanding of the applicability of
existing filtering
frameworks under unknown inputs (since almost all of them
require perfect
knowledge of the noise covariances) but also calls for
further
investigation of alternative and more viable noise covariance
methods under
unknown inputs. Especially, it remains to be explored whether the noise covariances are uniquely identifiable using other correlation-based methods. We are also interested to use regularization for
noise covariance estimation under unknown inputs, and investigate the relevant property guarantees for the covariance estimates. The above topics are the main subject of our current and future work.
\end{abstract}



\markboth{}{Shell \MakeLowercase{\textit{et al.}}: A Divide and Conquer Approach to Cooperative Distributed Model
Predictive Control}



\begin{IEEEkeywords}
Estimation; Arbitrary
unknown input; Kalman filter; Noise covariance estimation.
\end{IEEEkeywords}

\IEEEpeerreviewmaketitle

\section{Introduction}

The last few decades have witnessed much progress in the development of
parameter identification techniques and their applications in practice (see,
e.g., \cite{Mahata2004}-\cite{Mu2014}). Popular state estimation methods
include the well-known Kalman filter (KF) and adaptive KF \cite%
{Gustafsson2000}, moving horizon estimation \cite{Rawlings2001}-\cite%
{Alessandri2020}, etc. Despite existing methods' versatility, their
performance might be compromised under unmodeled dynamics whose models or
statistical properties are hard to obtain. Typical scenarios include system
faults, abrupt/jumping noises, arbitrary vehicle tires/ground interactions,
and systems with network-induced effects or attacks (see the in-depth
discussions in \cite{Johansen2014}-\cite{Yang2019} and the references
therein). Hence, estimation under arbitrary unknown inputs (whose models or
statistical properties are not assumed to be available), also called unknown
input decoupled estimation, has received much attention in the past. A
seminal work on unknown input decoupled estimation is due to Hautus \cite%
{Hautus1983} where it has been shown that the strong detectability\footnote{%
The strong$^{\ast }$ detectability concept was also introduced in \cite%
{Hautus1983}. The two criteria, as discussed in \cite{Hautus1983}, are
equivalent for discrete-time systems, but differ for continuous systems.
Here we focus on the filtering case of discrete-time systems.} criterion is
necessary and sufficient for the existence of a stable observer for
estimating the state/unknown input.

Works on the filtering case, e.g., \cite{Darouach2003}-\cite{Su2015}, have
similar rank matching and system being minimum phase requirements as in \cite%
{Hautus1983}. Extensions to cases with rank-deficient shaping matrices have
been discussed in \cite{Hsieh2009}-\cite{Frazzoli2016}. It has also been
shown in the above works that for unbiased and minimum variance estimation
of the state/unknown input, the initial guess of the state must be unbiased.
Recently, connections between the above-mentioned results and KF of systems
within which the unknown input is taken to be a white noise of unbounded
variance, have been established in \cite{Bitmead2019}. There are also works
dedicated to alleviating the strong detectability conditions and the
unbiased initialization requirement, and the incorporation of norm
constraints (see \cite{Kong2019Auto}-\cite{Kong2021Auto} and the references
therein).

However, most existing filtering works mentioned above assume that the
process and measurement noise covariances (denoted as $Q$ and $R$,
respectively) are perfectly known for the optimal filter design. This raises
the question of whether and under what conditions one can identify $Q/R$
from real data. We believe that addressing the identifiability issue of
noise covariances under arbitrary unknown inputs is important because in
practice the noise covariances are not known \textit{a priori} and have to
be identified from real closed-loop data where there might be unknown system
uncertainties such as faults, etc. Another relevant application is tracking
of targets whose motions might be subject to abrupt disturbances (in the
form of unknown inputs), as considered in our recent work \cite{Kong2021ICRA}%
.

To our best knowledge, \cite{Yu2016}-\cite{Pan2013} are the only existing
works on identification of stochastic systems under unknown inputs. However,
in \cite{Yu2016}-\cite{Pan2013}, the unknown inputs are assumed to be a
wide-sense stationary process with rational power spectral density or
deterministic but unknown signals. We do not make such assumptions here.
Also, we aim to investigate the identifiability of the original noise
covariances for linear Gaussian systems with unknown inputs. This is in
contrast to the work in \cite{Yu2016} where the measurement noise covariance
of the considered system is assumed to be known, and the input
autocorrelations are identified from the output data and then used for input
realization and filter design. Our work is also different from subspace
identification where the stochastic parameters of the system are estimated
and used to calculate the optimal estimator gain \cite{Gevers2006}.

Note that noise covariance estimation is a special parameter identification
question that is of lasting interest for the control community, and the
literature is fairly mature. Existing noise covariance estimation methods
can be classified as Bayesian, maximum likelihood, covariance matching,
correlation-based techniques, etc., (see \cite{Garatti2014}-\cite%
{Kerrigan2017} and the references therein). Especially, the autocovariance
least-squares (ALS) framework in \cite{Rawlings2006}-\cite{Arnold2020} is a
popular correlation-based method that has gained much attention in the
recent literature \cite{Ge2017}-\cite{Deng2020}. The main concept of ALS is
to design a stable filter without having to know the true noise covariances.
Since the filter is suboptimal, the innovations will be correlated, based on
which the noise covariance estimation question can be transformed to a
standard least-squares optimization problem.

Still, most noise covariance estimation methods mentioned-above have not
considered the case with unknown inputs. This observation motivates us to
study the identifiability of $Q/R$ for systems under unknown inputs.
Especially, we discuss the correlation-based ALS framework and show that (i)
to apply the ALS framework for the problem at hand, one has to apply a
linear transformation to the innovation so that it is decoupled from the
unknown inputs (see discussions in Section III. A); (ii) the ALS problem for
jointly estimating $Q$ and $R$ does not have a unique solution (see in
Theorem \ref{ALS_solution}); (iii) the ALS problem for estimating $Q$ or $R$%
, when the other is known, does not have a unique solution (see Corollary %
\ref{Q_unknown_ALS_extension}).

The above findings reveal that the noise covariances are in general not
uniquely identifiable using the ALS approach. This not only helps us to
better understand the applicability of existing filtering frameworks under
unknown inputs (since almost all of them require perfect knowledge of the
noise covariances) but also calls for further investigation of alternative
and more viable noise covariance methods under unknown inputs. Especially,
it remains to be explored whether the noise covariances are uniquely
identifiable using other correlation-based methods (see \cite{DunikSurvey}
and the references therein). We are also interested to use regularization for
noise covariance estimation under unknown inputs, and investigate the relevant property guarantees for the covariance estimates (see \cite{Chen2013}
and the references therein). The above topics are the main subject of our
current and future work.

The remainder of the paper is organized as follows. In Section II, we recall
preliminaries on estimation of systems with unknown inputs. Section III
contains our major results. Section IV verifies the theoretical results with
numerical examples. Section V concludes the paper. \textbf{Notation:} $A^{%
\mathrm{T}}$ denotes the transpose of matrix $A$. $\mathbf{R}^{n}$ stands
for the $n$-dimensional Euclidean space. $I_{n}$ stands for identity
matrices of $n$ dimension. $\mathbb{C}$ and $\left\vert z\right\vert $
denote the field of complex numbers and the absolute value of a given
complex number $z$, respectively. $[a_{1},\cdots ,a_{n}]$ denotes $[a_{1}^{%
\mathrm{T}}\cdots a_{n}^{\mathrm{T}}]^{\mathrm{T}},$ where $a_{1},\cdots
,a_{n}\ $are scalars/vectors/matrices of appropriate dimensions.

\section{\label{lyap}Preliminaries and Problem Statement}

We consider the discrete-time linear time-invariant (LTI) model of the plant:%
\begin{equation}
\left\{
\begin{array}{l}
x_{k+1}=Ax_{k}+Bd_{k}+Gw_{k} \\
y_{k}=Cx_{k}+Dd_{k}+v_{k}%
\end{array}%
\right. ,  \label{plant}
\end{equation}%
where $x_{k}\in \mathbf{R}^{n},$ $d_{k}\in \mathbf{R}^{q}$, and $y_{k}\in
\mathbf{R}^{p}$ are the state, the unknown input, and the output,
respectively; $w_{k}\in \mathbf{R}^{g}$ and $v_{k}\in \mathbf{R}^{p}$
represent zero-mean uncorrelated Gaussian process and measurement noises
with covariances $Q$ and $R$, respectively; $A,B,G,C,$ and $D$ are real and
known matrices with appropriate dimensions (When the noise shaping matrix $G$
is unknown, one needs to identify $\overline{Q}=GQG^{\mathrm{T}}$ and $R$.
The analysis of this paper can be directly extended to this case); the pair $%
(A,C)$ is assumed to be detectable; we also assume that the initial state $%
x_{0}$ is independent of the noises. Without loss of generality, we assume $%
n\geq g$ and $G\in \mathbf{R}^{n\times g}$ to be of full column rank (when
this is not the case, one can remodel the system to obtain a full rank
shaping matrix $\overline{G}$). For system (\ref{plant}), a major question
of interest is the existence condition of an observer/filter that can
estimate the state/unknown input with asymptotically stable error, using
only the output. To address this question, concepts such as strong
detectability and strong estimator have been rigorously discussed in \cite%
{Hautus1983}. As remarked in \cite{Hautus1983}, the term \textquotedblleft
strong\textquotedblright\ is to emphasize that state estimate has to be
obtained without knowing $d_{k}$.

\begin{theorem}
\label{condition}(\cite{Hautus1983}) The following statements hold true: (i)
the system (\ref{plant}) has a strong estimator if and only if it is
strongly detectable; (ii) the system (\ref{plant}) is strongly detectable if
and only if%
\begin{equation}
rank\left( \left[
\begin{array}{cc}
CB & D \\
D & 0%
\end{array}%
\right] \right) =rank(D)+rank\left( \left[
\begin{array}{c}
B \\
D%
\end{array}%
\right] \right) ,  \label{rankmatching}
\end{equation}%
and all its invariant zeros are stable, i.e.,%
\begin{equation}
rank\left( \underset{\mathcal{M}\left( z\right) }{\underbrace{\left[
\begin{array}{cc}
zI_{n}-A & -B \\
C & D%
\end{array}%
\right] }}\right) =n+rank\left( \left[
\begin{array}{c}
B \\
D%
\end{array}%
\right] \right) ,  \label{miniphase}
\end{equation}%
for all $z\in \mathbb{C}\ $and $\left\vert z\right\vert \geq 1$.
\end{theorem}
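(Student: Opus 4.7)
The plan is to attack the two equivalences in the natural order: first establish part (ii), the algebraic characterization of strong detectability, then leverage it for part (i) on strong estimator existence. I would work with the deterministic skeleton of (\ref{plant}) (i.e., set $w_k = v_k = 0$, since strong detectability is a noise-free input--output property) and recast strong detectability as the statement that every trajectory with $y_k \to 0$ must also satisfy $x_k \to 0$.

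For part (ii), the forward direction uses exponential probes. Substituting $x_k = z^k \xi$ and $d_k = z^k \eta$ into (\ref{plant}) produces zero output exactly when $(\xi, \eta)$ lies in the kernel of $\mathcal{M}(z)$; strong detectability then forbids any nontrivial such pair at $|z| \ge 1$ with $\xi \ne 0$, which delivers (\ref{miniphase}). The rank identity (\ref{rankmatching}) is extracted by a two-step direct-feedthrough analysis: starting from $x_0 = 0$, the dependence of $(y_0, y_1)$ and of $x_2$ on $(d_0, d_1)$ is governed by precisely the blocks $CB$, $D$, $B$ appearing in (\ref{rankmatching}), and the requirement that zero output over two steps force $x_2 = 0$ is algebraically equivalent to the rank identity. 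For the converse, I would decompose any candidate zero-output trajectory via the Smith form of $\mathcal{M}(z)$, using (\ref{miniphase}) to force each invariant-zero mode to decay and (\ref{rankmatching}) to exclude the burst components that would otherwise bypass the zero structure.

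For part (i), the implication from strong estimator to strong detectability is quick: between two trajectories sharing the same output, one of which is identically zero, the estimator's asymptotic correctness forces the remaining state trajectory to decay. The constructive converse is the real work. Under (\ref{rankmatching}) and (\ref{miniphase}), I would stack $L+1$ consecutive outputs with $L$ at least the maximum order of an infinite zero; the rank identity, extended along the window, supplies a left annihilator that eliminates the unknown-input columns and produces a clean linear relation between a transformed output block and $x_k$. A Luenberger-type update built from this relation then has error dynamics whose closed-loop eigenvalues can be assigned inside the invariant zeros of $\mathcal{M}(z)$, all stable by (\ref{miniphase}).

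The principal obstacle is the constructive converse in (i) when $D$ is rank-deficient. Extending the single-step annihilator provided by (\ref{rankmatching}) consistently over the whole stacked window, choosing $L$ correctly, and certifying that the residual error-dynamics spectrum is contained in the invariant-zero set all require a careful block decomposition --- either via the Smith form of $\mathcal{M}(z)$ or via a special-coordinate-basis argument --- rather than the simple pseudoinverse manipulation that suffices when $D$ has full column rank.
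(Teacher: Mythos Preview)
The paper does not supply a proof of this theorem at all: it is stated as a preliminary result, attributed directly to \cite{Hautus1983}, and then used as a standing hypothesis for the filter design and the identifiability analysis in Section~\ref{ALS_framework}. There is therefore no ``paper's own proof'' to benchmark your proposal against.

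That said, your outline is broadly in the spirit of the classical treatment. A few comments on the substance. For part~(ii), the exponential-probe argument cleanly yields the necessity of the invariant-zero condition~(\ref{miniphase}), but your derivation of the rank-matching condition~(\ref{rankmatching}) is loose: the claim that ``zero output over two steps forces $x_2=0$'' is not by itself equivalent to~(\ref{rankmatching}); rather, (\ref{rankmatching}) encodes that the map $(d_0,d_1)\mapsto (y_0,y_1)$ has a kernel whose image under $(d_0,d_1)\mapsto Bd_0$ is trivial, and you should state this precisely. For the converse of~(ii), a Smith-form or special-coordinate-basis decomposition is indeed the standard route, and you have correctly identified the rank-deficient-$D$ case as the place where the bookkeeping becomes delicate. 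For part~(i), your plan to stack outputs, annihilate the unknown-input columns, and build a Luenberger-type update whose spectrum is governed by the invariant zeros matches the construction in Hautus's original paper and its descendants; the main technical burden, as you note, is showing that the residual observability after annihilation is sufficient for pole placement inside the unit disk, which ultimately rests on~(\ref{miniphase}).

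In short: your proposal is a reasonable reconstruction of the classical proof, but since the present paper simply cites the result, no comparison is possible here.
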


Conditions (\ref{rankmatching})-(\ref{miniphase}) are the so-called rank
matching and minimum phase requirements, respectively. Note that Theorem \ref%
{condition} holds for both the deterministic and stochastic cases (hence we
use \textquotedblleft estimator\textquotedblright\ instead of KF/full state
Luenberger observer). For system (\ref{plant}), the noise covariances $Q$
and $R$ are usually not available, and have to be identified from data.
However, all existing filtering methods in the literature adopt the
assumption of knowing $Q$ and $R$ exactly, which is not practical. This
raises the question of whether and under what conditions one can identify $Q$
and/or $R$. Of particular interest in this paper is the correlation-based
ALS method. Especially, the questions of interests are formally stated as
follows.

\begin{problem}
\label{problem1}Given system (\ref{plant}) with $A,B,G,C,$ and $D$ known, we
aim to investigate the following questions: under the assumption that system
(\ref{plant}) satisfies the strong detectability condition in Theorem \ref%
{condition}, using the ALS approach, whether and under what conditions\ one
can (a) uniquely jointly identify $Q$ and $R$; (b) uniquely identify $Q$ or $%
R$, assuming the other covariance to be known.
\end{problem}

\section{\label{ALS_framework}Identifiability Analysis of Q/R Using the ALS Framework}

This section contains our major results. In particular, we will prove that
(i) $Q$ and $R$ cannot be uniquely jointly identified; (ii) neither $Q$ nor $%
R$ is not uniquely identifiable, when the other is known.

\subsection{The filter and the choice of innovation model}

When the system (\ref{plant}) is strongly detectable, and $Q$ and $R$ are
known, one can design an unbiased and optimal filter for estimating the
state/unknown input. When $Q/R$ are not known, one can still design an
unbiased and stable (but not optimal) filter. To do so for the system (\ref%
{plant}), we assume $rank(D)=q$, and adopt the framework of \cite%
{Gillijns2007B}. Other methods in \cite{Darouach2003}, \cite{Hsieh2009}-\cite%
{Frazzoli2016}, etc., can be considered similarly and we will not elaborate
these extensions further. To be specific, the proposed filter implements the
following steps recursively after initialization:

\begin{enumerate}
\item[\textbf{1.}] Unknown input estimation:%
\begin{equation}
\widehat{d}_{k}=F(y_{k}-C\widehat{x}_{k\mid k-1});  \label{input_estimation}
\end{equation}

\item[\textbf{2.}] Measurement update:%
\begin{equation}
\widehat{x}_{k\mid k}=\widehat{x}_{k\mid k-1}+L(y_{k}-C\widehat{x}_{k\mid
k-1});  \label{Mea_update}
\end{equation}

\item[\textbf{3.}] Time update:%
\begin{equation}
\widehat{x}_{k+1\mid k}=A\widehat{x}_{k\mid k}+B\widehat{d}_{k}.
\label{prediction}
\end{equation}
\end{enumerate}

It was shown in \cite{Gillijns2007B} that steps in (\ref{input_estimation})
and (\ref{prediction}) give unbiased estimates of the unknown input and
state, respectively, if and only if the initial guess of the state is
unbiased, and $F\in \mathbf{R}^{q\times p}$ and $L\in \mathbf{R}^{n\times p}$
satisfy%
\begin{equation}
\left[ F,L\right] D=\left[ I_{q},0\right] .  \label{unbiased_condition}
\end{equation}%
The stability of the filter (\ref{input_estimation})-(\ref{prediction}) can
also be guaranteed under conditions (\ref{rankmatching})-(\ref{miniphase})
(see, e.g., \cite{Fang2012}, \cite{Kong2020Auto}). Define%
\begin{equation}
\begin{array}{l}
\widetilde{y}_{k}=y_{k}-C\widehat{x}_{k\mid k-1},\text{ }\widetilde{d}%
_{k}=d_{k}-\widehat{d}_{k}, \\
\widetilde{x}_{k\mid k}=x_{k}-\widehat{x}_{k\mid k},\text{ }\widetilde{x}%
_{k+1\mid k}=x_{k+1}-\widehat{x}_{k+1\mid k},%
\end{array}
\label{innovations}
\end{equation}%
as the innovation, the unknown input estimation error, the filtered state
error, and the state prediction error, respectively. Based on (\ref{plant})
and (\ref{input_estimation})-(\ref{prediction}), we have%
\begin{equation}
\begin{array}{l}
\widetilde{y}_{k}=Dd_{k}+C\widetilde{x}_{k\mid k-1}+v_{k}, \\
\widetilde{d}_{k}=-FC\widetilde{x}_{k\mid k-1}-Fv_{k}.%
\end{array}
\label{innovation}
\end{equation}%
The state-space model to be used for computing the autocovariance in the
next subsection is given as follows:%
\begin{equation}
\left\{
\begin{array}{l}
\widetilde{x}_{k+1\mid k}=A_{c}\widetilde{x}_{k\mid k-1}+\widetilde{G}%
\underset{\widetilde{w}_{k}}{\underbrace{\left[ w_{k},v_{k}\right] }} \\
\mathcal{Y}_{k}=L\widetilde{y}_{k}=\widetilde{L}\widetilde{x}_{k\mid
k-1}+Lv_{k}%
\end{array}%
\right. ,  \label{error_dynamics}
\end{equation}%
where $\widetilde{L}=LC$, $A_{c}=A-KC$, $\text{ }\widetilde{G}=\left[
\begin{array}{cc}
G & -K%
\end{array}%
\right]$, $\text{ }K=AL+BF$. It is worthwhile remarking that we have chosen $%
\mathcal{Y}_{k}$ as the output of the error dynamics model in (\ref%
{error_dynamics}), rather than $\widetilde{y}_{k}$, and $\widetilde{d}_{k}$
in (\ref{innovation}). This is because $\widetilde{y}_{k}$ is affected by
the unknown inputs, for which we do not assume to have any statistical
properties; $\widetilde{d}_{k}$ is the unknown input estimation error, which
can be used for analysis, but cannot be obtained from data processing (since
we do not know the true $d_{k}$). On the contrary, $\mathcal{Y}_{k}$ is a
linear transformation of the standard innovation $\widetilde{y}_{k}$, and
decoupled from $d_{k}$, thus can be obtained from data processing.

\subsection{Q and R are not uniquely jointly identifiable}

When the filter in (\ref{input_estimation})-(\ref{prediction}) is stable,
the steady-state estimation error covariance $P$ satisfies%
\begin{equation}
P=A_{c}PA_{c}^{\mathrm{T}}+GQG^{\mathrm{T}}+KRK^{\mathrm{T}}.
\label{covalya}
\end{equation}%
The autocovariance is defined as the expectation of the data with its lagged
version of itself, i.e., $\mathscr{E}(\mathcal{Y}_{k}\mathcal{Y}_{k+j}^{%
\mathrm{T}})$. Suppose that $\left\{ \mathcal{Y}_{1},\mathcal{Y}_{2},\cdots ,%
\mathcal{Y}_{N_{d}}\right\} $ is the innovations calculated from (\ref%
{error_dynamics}), where $N_{d}$ is the number of data points. Denote%
\begin{equation*}
\mathcal{Y}=\left[
\begin{array}{cccc}
\mathcal{Y}_{1} & \mathcal{Y}_{2} & \cdots & \mathcal{Y}_{N_{d}-N+1} \\
\mathcal{Y}_{2} & \mathcal{Y}_{3} & \cdots & \mathcal{Y}_{N_{d}-N+2} \\
\vdots & \vdots & \vdots & \vdots \\
\mathcal{Y}_{N} & \mathcal{Y}_{N+1} & \cdots & \mathcal{Y}_{N_{d}}%
\end{array}%
\right] \in \mathbf{R}^{Nn\times \widetilde{n}},
\end{equation*}%
where $\widetilde{n}=N_{d}-N+1$, $N$ represents the window size used in
computing the autocovariance. If the steady-state distribution of $%
\widetilde{x}_{k\mid k-1}$ is used as the initial condition, with (\ref%
{error_dynamics}), we can obtain%
\begin{equation}
\begin{array}{l}
\mathscr{E}(\mathcal{Y}_{k}\mathcal{Y}_{k}^{\mathrm{T}})=\widetilde{L}P%
\widetilde{L}^{\mathrm{T}}+LRL^{\mathrm{T}}\in \mathbf{R}^{n\times n}, \\
\mathscr{E}(\mathcal{Y}_{k+j}\mathcal{Y}_{k}^{\mathrm{T}})=\widetilde{L}%
A_{c}^{j}PL^{\mathrm{T}}-\widetilde{L}A_{c}^{j-1}KRL^{\mathrm{T}},%
\end{array}
\label{autoc}
\end{equation}%
with $j\geq 1,$and $\widetilde{L}$ being defined in (\ref{error_dynamics}).
Note that the above autocovariance expressions are independent of $k$.

Consider the first column block of the full autocovariance matrix of the
innovation process over a window of length $N$:%
\begin{equation}
\begin{array}{l}
\mathcal{R}(N)=\mathscr{E}\left( \left[
\begin{array}{c}
\mathcal{Y}_{k}\mathcal{Y}_{k}^{\mathrm{T}} \\
\vdots \\
\mathcal{Y}_{k+N-1}\mathcal{Y}_{k}^{\mathrm{T}}%
\end{array}%
\right] \right) \\
=\underset{\Theta }{\underbrace{\left[
\begin{array}{c}
\widetilde{L} \\
\Theta _{1}A_{c}%
\end{array}%
\right] }}P\widetilde{L}^{\mathrm{T}}+\underset{\Upsilon }{\underbrace{\left[
\begin{array}{c}
L \\
-\Theta _{1}K%
\end{array}%
\right] }}RL^{\mathrm{T}}\text{, }%
\end{array}
\label{expect}
\end{equation}%
where $\Theta _{1}=\left[ \widetilde{L},\widetilde{L}A_{c},\cdots ,%
\widetilde{L}A_{c}^{N-2}\right] .$ The above formula is the ideal way to
compute $\mathcal{R}(N)$. Since the process in (\ref{error_dynamics}) is
driven by Gaussian noises, it is ergodic, and a practical way of
approximating the expectation is to use the time average%
\begin{equation}
\mathcal{R}^{\ast }(N)=\frac{1}{N_{d}-N+1}\mathcal{YY}_{\func{row}}^{\mathrm{%
T}},  \label{approx}
\end{equation}%
where $\mathcal{Y}_{\func{row}}=\left[
\begin{array}{cccc}
I_{n} & 0 & \cdots & 0%
\end{array}%
\right] \mathcal{Y}$. Define $X_{s}$ as the outcome of applying the
vectorization operator to matrix $X$. Denote
\begin{equation}
b=(\mathcal{R}^{\ast }(N))_{s}.  \label{bihat}
\end{equation}%
In the following, we employ standard definition and properties of the
Kronecker product. We apply the vec operator on both sides of (\ref{expect})
and use the fact $(AXB)_{s}$ $=(B^{\mathrm{T}}\otimes A)X_{s}$ to obtain%
\begin{equation}
(\mathcal{R}(N))_{s}=(\widetilde{L}\otimes \Theta )P_{s}+(L\otimes \Upsilon
)R_{s}.  \label{bi}
\end{equation}

From (\ref{covalya}), one has%
\begin{equation}
P_{s}=(A_{c}\otimes A_{c})P_{s}+(G\otimes G)Q_{s}+(K\otimes K)R_{s}.
\label{ps}
\end{equation}%
By following similar steps in \cite{Rawlings2006}-\cite{Rawlings2009AUTO},
we then have the ALS problem formulation for identifying $Q\ $and $R$:%
\begin{equation}
\Xi ^{\ast }=\arg \min\limits_{\Xi }\left\Vert \mathcal{H}\Xi -b\right\Vert
_{\mathcal{W}}^{2},  \label{dals}
\end{equation}%
where $b$ is defined in (\ref{bihat}), $\mathcal{W}>0$ is a weighting
matrix, and%
\begin{equation}
\mathcal{H}=\left[
\begin{array}{cc}
\mathcal{H}_{1} & \mathcal{H}_{2}%
\end{array}%
\right] ,\text{ }\Xi =\left[
\begin{array}{cc}
Q_{s}^{\mathrm{T}} & R_{s}^{\mathrm{T}}%
\end{array}%
\right] ^{\mathrm{T}},  \label{RSAI}
\end{equation}%
in which,%
\begin{equation*}
\begin{array}{l}
\mathcal{H}_{1}=(\widetilde{L}\otimes \Theta )(I_{n^{2}}-A_{c}\otimes
A_{c})^{-1}(G\otimes G), \\
\mathcal{H}_{2}=(\widetilde{L}\otimes \Theta )(I_{n^{2}}-A_{c}\otimes
A_{c})^{-1}(K\otimes K)+(L\otimes \Upsilon ).%
\end{array}%
\end{equation*}%
If $\mathcal{H}$ in (\ref{RSAI}) is of full column rank, then the solution
of (\ref{dals}) exists, and is unique; moreover, desirable properties such
as unbiasedness, asymptotic convergence of the covariance estimates, as
established in \cite{Rawlings2006}-\cite{Rawlings2009AUTO}, can be obtained.
However, as it will be shown next, it is impossible for $\mathcal{H}$ in (%
\ref{RSAI}) to be of full column rank.

\begin{theorem}
\label{ALS_solution}Assume that $N\geq n+1$; the system (\ref{plant})
satisfies the strong detectability condition in Theorem \ref{condition}, and
a filter of the form (\ref{input_estimation})-(\ref{prediction}) has been
designed for it. Then the following statements hold true:

(i) $\mathcal{H}$ is of full column rank if and only if%
\begin{equation}
rank(A)=rank(C)=n\text{, }rank(L)=p;  \label{unique_condition}
\end{equation}%
(ii) it is impossible for condition (\ref{unique_condition}) to hold, i.e.,
the ALS problem (\ref{dals}) does not have a unique solution.
\end{theorem}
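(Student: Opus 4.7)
The plan is to analyze the null space of $\mathcal{H}$ directly, exploiting the Lyapunov identity implicit in its construction. Starting from $\mathcal{H}\Xi = 0$ with $\Xi = [Q_{s}^{\mathrm{T}}, R_{s}^{\mathrm{T}}]^{\mathrm{T}}$, I would substitute the vectorized Lyapunov relation from (\ref{ps}), namely
\begin{equation*}
P_{s} = (I_{n^{2}} - A_{c}\otimes A_{c})^{-1}[(G\otimes G)Q_{s} + (K\otimes K)R_{s}],
\end{equation*}
into $\mathcal{H}_{1}Q_{s} + \mathcal{H}_{2}R_{s}$ to collapse the kernel condition to $(\widetilde{L}\otimes \Theta)P_{s} + (L\otimes \Upsilon)R_{s} = 0$. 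Un-vectorizing via $(AXB)_{s} = (B^{\mathrm{T}}\otimes A)X_{s}$ then yields the single matrix equation $\Theta P\widetilde{L}^{\mathrm{T}} + \Upsilon R L^{\mathrm{T}} = 0$, and the block structure of $\Theta$ and $\Upsilon$ decomposes this into the top-block equation $L(CPC^{\mathrm{T}} + R)L^{\mathrm{T}} = 0$ together with the tail-block equations $\widetilde{L}A_{c}^{i-1}(A_{c}P\widetilde{L}^{\mathrm{T}} - KRL^{\mathrm{T}}) = 0$ for $i = 1,\ldots,N-1$. The rest of the argument reduces to determining when this block system admits only the trivial solution.

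For the sufficiency direction of (i), I would observe that $rank(C) = n$ forces $p \geq n$ while $rank(L) = p$ forces $n \geq p$, so $n = p$ and $L$, $C$ are both square invertible; consequently $\widetilde{L} = LC$ is invertible as well. The top-block equation then gives $R = -CPC^{\mathrm{T}}$, and the $i = 1$ tail-block equation, after cancelling the invertible $\widetilde{L}$ on the left and $L^{\mathrm{T}}$ on the right, reads $A_{c}PC^{\mathrm{T}} = KR$; substituting $R = -CPC^{\mathrm{T}}$ and $A_{c} = A - KC$ collapses this to $APC^{\mathrm{T}} = 0$. Invertibility of $A$ and full column rank of $C$ then force $P = 0$, hence $R = 0$, and the Lyapunov equation (\ref{covalya}) together with full column rank of $G$ yields $Q = 0$. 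Necessity of the rank conditions in (i) I would obtain by contraposition, exhibiting in each case an explicit nonzero $(Q,R)$ that survives the block system: for $rank(L) < p$ take $v \in \ker L$ and set $R = vv^{\mathrm{T}}$ to annihilate $\Upsilon R L^{\mathrm{T}}$; for $rank(C) < n$ take $z \in \ker C$ and work with $P = zz^{\mathrm{T}}$; the $rank(A) < n$ case uses $w \in \ker A$. This last case is where I expect the main technical difficulty, since one cannot simply appeal to a left/right annihilator of a single block, but must choose $(Q,R)$ so that the resulting $P$ obtained via (\ref{covalya}) simultaneously satisfies all the tail-block equations.

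Part (ii) is then an essentially immediate corollary of the unbiasedness constraint (\ref{unbiased_condition}): reading off its second block equation gives $LD = 0$, and since $rank(D) = q \geq 1$ the columns of $D$ span a $q$-dimensional subspace of $\ker L \subset \mathbf{R}^{p}$, so $rank(L) \leq p - q < p$. Hence the third condition in (\ref{unique_condition}) cannot hold, and part (i) concludes that $\mathcal{H}$ is rank deficient and (\ref{dals}) fails to have a unique solution.
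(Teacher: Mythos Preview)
Your proposal is correct and follows essentially the same route as the paper: the same un-vectorization of $\mathcal{H}\Xi=0$ to $\Theta P\widetilde{L}^{\mathrm{T}}+\Upsilon RL^{\mathrm{T}}=0$, the same block decomposition, the same sufficiency argument collapsing to $APC^{\mathrm{T}}=0$, the same contrapositive constructions for necessity, and the same $LD=0$ observation for part (ii). The one place you flag as tricky---the $\operatorname{rank}(A)<n$ case---is resolved in the paper by reversing the order you suggest: instead of choosing $(Q,R)$ and then solving the Lyapunov equation for $P$, the paper picks $P=ww^{\mathrm{T}}$ (with $Aw=0$) and $R=-CPC^{\mathrm{T}}$ directly, which automatically satisfies both block equations (since $APC^{\mathrm{T}}=Aww^{\mathrm{T}}C^{\mathrm{T}}=0$ gives $A_{c}PC^{\mathrm{T}}=-KCPC^{\mathrm{T}}=KR$, and the top block vanishes by construction), and only then backs out $Q_{s}$ from the vectorized Lyapunov identity using full column rank of $G\otimes G$.
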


\begin{proof}
(i) This part follows from similar arguments with those of \cite[Chap. 3]%
{Arnold2020}, and is included here for completeness. Suppose that $\mathcal{H%
}\Xi =0,$ with $\Xi $ being defined in (\ref{RSAI}). An equivalent
representation of $\mathcal{H}\Xi =0$ is $\Theta P\widetilde{L}^{\mathrm{T}%
}+\Upsilon RL^{\mathrm{T}}=0,$ which can be further expressed as%
\begin{equation}
\widetilde{L}P\widetilde{L}^{\mathrm{T}}+LRL^{\mathrm{T}}=0,\text{ }\Theta
_{1}A_{c}PC^{\mathrm{T}}L^{\mathrm{T}}=\Theta _{1}KRL^{\mathrm{T}},
\label{two_equa}
\end{equation}%
where $\Theta _{1}$ is defined in (\ref{expect}). Given that the system (\ref%
{plant}) satisfies the strong detectability condition in Theorem \ref%
{condition}, if one designs a filter of the form (\ref{input_estimation})-(%
\ref{prediction}) for it, one has that $A_{c}=A-KC$ is Schur stable (see,
e.g., \cite{Kong2020Auto}). Hence, $(A_{c},\widetilde{L})$ is detectable,
because $A_{c}=A_{c}-0\cdot \widetilde{L}$ is stable. It follows that $%
\Theta _{1}$ is of full column rank, when $N\geq n+1$. We first prove
sufficiency. Based on (\ref{two_equa}), we have%
\begin{equation*}
R=-CPC^{\mathrm{T}},\text{ }A_{c}PC^{\mathrm{T}}-KR=0,
\end{equation*}%
when $L$ is of full rank. The above two equalities lead to%
\begin{equation}
APC^{\mathrm{T}}=0\text{.}  \label{condition_zero}
\end{equation}%
If $A$ is nonsingular and $C$ is of full column rank, then we have $P=0$, $%
R=0$, which further implies that $Q=0$. As a result, the null space of $%
\mathcal{H}$ only has one element, i.e., the zero vector. In other words, $%
\mathcal{H}$ \ is of full column rank. We next prove necessity. Firstly,
assume that $A$ is singular, i.e., there exists a nonzero vector $z$ such
that $Az=0$. Set $P=zz^{\mathrm{T}}$ so that (\ref{condition_zero}) holds.
If we select $R=-CPC^{\mathrm{T}}$, then one has that the two equalities in (%
\ref{two_equa}) hold. Moreover, from (\ref{ps}), one has that%
\begin{equation*}
\begin{array}{l}
\underset{\mathcal{M}}{\underbrace{(G\otimes G)}}Q_{s}=\underset{\mu }{%
\underbrace{(I_{n^{2}}-A_{c}\otimes A_{c})P_{s}-(K\otimes K)R_{s}}}, \\
\Rightarrow Q_{s}=\mathcal{M}^{\mathrm{T}}(\mathcal{MM}^{\mathrm{T}}\mathcal{%
)}^{-1}\mu ,%
\end{array}%
\end{equation*}%
where we have used the assumption that $G$ (and hence $\mathcal{M)}$ is of
full column rank. Note that, $\mu $ might or might not be zero. However, for
either case, we have $R_{s}\neq 0$. Hence, there exists a nonzero element $%
\Xi =\left[
\begin{array}{cc}
Q_{s}^{\mathrm{T}} & R_{s}^{\mathrm{T}}%
\end{array}%
\right] ^{\mathrm{T}}$ in the null space of $\mathcal{H}$. Now assume that $%
rank(L)\neq p$ and there exists a nonzero vector $z_{1}$ such that $Lz_{1}=0$%
. If we set $R=z_{1}z_{1}^{\mathrm{T}}$, $P=0$, then the two equalities in (%
\ref{two_equa}) hold. Then by following similar arguments with the above,
there exists a nonzero element $\Xi =\left[
\begin{array}{cc}
Q_{s}^{\mathrm{T}} & R_{s}^{\mathrm{T}}%
\end{array}%
\right] ^{\mathrm{T}}$ in the null space of $\mathcal{H}$. The necessity of
full column rankness of $C$ can also be proved similarly.

(ii) Assume that condition (\ref{unique_condition}) holds.
For the unbiased filter design, one must have $LD=0\Rightarrow D=0,$ which
contradicts the assumption that $D$ is of full column rank. As a result, it
is impossible for condition (\ref{input_estimation}) to hold, and the ALS
problem (\ref{dals}) does not have a unique solution. This completes the
proof.
\end{proof}

\begin{remark}
\textrm{\textrm{\label{rm1}The results in Theorem \ref{ALS_solution} can be
considered as generalizations of those in \cite{Rawlings2006}-\cite%
{Rawlings2015} to the case with unknown inputs. Although Theorem \ref%
{ALS_solution} is established for the case with direct feedthrough, for the
case without feedthrough, i.e., $D=0$ in (\ref{plant}), the same statements
regarding the non-identifiability of $Q$ and $R$ can also be obtained (this
can be shown by taking the filtering framework of \cite{Gillijns2007A} and
similar steps above). For the situation without feedthrough, the innovation
model to be used for calculating the autocovariance is different from (\ref%
{error_dynamics}), since in this case only the filtered state error is
unbiased (i.e., the predicted state error is biased). Detailed proofs are
omitted here due to limited space.} }
\end{remark}

\subsection{Neither Q nor R is uniquely identifiable when the other is known}

We next consider part (b) of Problem \ref{problem1}, i.e., identifiability
of $Q$ or $R$ when the other is known. We firstly consider the case of
estimating $Q$ when $R$ is known. Denote%
\begin{equation}
b_{Q}=b-\mathcal{H}_{2}R_{s},  \label{b_Q_ALS}
\end{equation}%
where $b$ and $\mathcal{H}_{2}$ are defined in (\ref{bihat}) and (\ref{RSAI}%
), respectively. By following similar steps to those of the previous
subsection, we have the following ALS problem formulation for identifying $Q$%
:%
\begin{equation}
\Xi _{Q}^{\ast }=\arg \min\limits_{\Xi _{Q}}\left\Vert \mathcal{H}_{1}\Xi
_{Q}-b_{Q}\right\Vert _{\mathcal{W}_{Q}}^{2}  \label{Q_unknown_ALS}
\end{equation}%
where $\Xi _{Q}=Q_{s},$ $\mathcal{H}_{1}$ is defined in (\ref{dals}), $b_{Q}$
is defined in (\ref{b_Q_ALS}), and $\mathcal{W}_{Q}>0$. For the case of
estimating $R$ when $Q$ is known, we denote%
\begin{equation}
b_{R}=b-\mathcal{H}_{1}Q_{s},  \label{b_R_ALS}
\end{equation}%
where $b$ and $\mathcal{H}_{1}$ are defined in (\ref{bihat}) and (\ref{RSAI}%
), respectively. By following similar steps to those of the previous
subsection, we then have the following ALS problem formulation for
identifying $R$:%
\begin{equation}
\Xi _{R}^{\ast }=\arg \min\limits_{\Xi _{R}}\left\Vert \mathcal{H}_{2}\Xi
_{R}-b_{R}\right\Vert _{\mathcal{W}_{R}}^{2}  \label{R_unknown_ALS}
\end{equation}%
where $\Xi _{R}=R_{s},$ $\mathcal{H}_{2}$ is defined in (\ref{dals}), $b_{R}$
is defined in (\ref{b_Q_ALS}), and $\mathcal{W}_{R}>0$. We then have the
following result.

\begin{corollary}
\label{Q_unknown_ALS_extension}Assume that $N\geq n+1$; the system (\ref%
{plant}) satisfies the strong detectability condition in Theorem \ref%
{condition}, and a filter of the form (\ref{input_estimation})-(\ref%
{prediction}) has been designed for it. Then neither of the ALS problems (%
\ref{Q_unknown_ALS}) and (\ref{R_unknown_ALS}) has a unique solution; in
other words, neither $Q$ nor $R$ is uniquely identifiable when the other is
known.
\end{corollary}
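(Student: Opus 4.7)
The plan is to repeat the null-space analysis of Theorem~\ref{ALS_solution}(i) separately on the two coefficient matrices $\mathcal{H}_1$ and $\mathcal{H}_2$, and to exhibit a nonzero element in each of their kernels. As in Theorem~\ref{ALS_solution}(ii), the single fact that will do the real work is that the unbiasedness constraint $LD = 0$ together with $\text{rank}(D) = q \geq 1$ forces $\text{rank}(L) \leq p - q < p$, so $L$ has a nontrivial kernel containing the column space of $D$.

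For (\ref{R_unknown_ALS}), the argument is essentially a direct re-run of the rank-deficient-$L$ portion of the necessity direction of Theorem~\ref{ALS_solution}(i), with the $Q$-slot of the null-space vector kept at zero by construction. Concretely, I would pick a nonzero $z_1 \in \ker L$ and set $R^* = z_1 z_1^{\mathrm{T}}$, which makes both $L R^* L^{\mathrm{T}} = 0$ and $\Upsilon R^* L^{\mathrm{T}} = 0$. What then remains is to drive $\Theta P^{(R^*)} \widetilde{L}^{\mathrm{T}}$ to zero, where $P^{(R^*)}$ is the Lyapunov solution driven by $K R^* K^{\mathrm{T}}$; the cleanest specialization is $z_1 = d_i$, a column of $D$, because $K d_i = B F d_i = B e_i$ makes the residual explicit. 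If that specialization does not already annihilate the residual blocks, I would replace $z_1 z_1^{\mathrm{T}}$ by $D \widetilde{R} D^{\mathrm{T}}$ for a free symmetric $\widetilde{R}$ and use a dimension count in the resulting linear map to extract a nonzero $\widetilde{R}$ that kills what is left.

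For (\ref{Q_unknown_ALS}), the null-space condition collapses to $\Theta P^{(Q^*)} \widetilde{L}^{\mathrm{T}} = 0$ with no $R$-term to compensate. Here I would exploit that $\widetilde{L} = LC$ inherits the rank deficiency of $L$, so that $\ker \widetilde{L}$ is nontrivial whenever $n > p - q$, and look for $Q^* \neq 0$ whose associated Lyapunov solution $P^{(Q^*)}$ satisfies $\widetilde{L} P^{(Q^*)} = 0$. A direct attempt is $Q^* = \tilde{z}\tilde{z}^{\mathrm{T}}$ with $G \tilde{z}$ chosen inside an $A_c$-invariant subspace of $\ker \widetilde{L}$, so that the Lyapunov sum stays inside $\ker \widetilde{L}$; as a fallback one can rely on the rank bound $\text{rank}(\mathcal{H}_1) \leq \text{rank}(\widetilde{L}) \cdot \text{rank}(\Theta) \leq (p-q)\,n$, which is strictly less than the $g^2$ columns of $\mathcal{H}_1$ in generic regimes.

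The hard part will be precisely this $\mathcal{H}_1$ case: unlike the $\mathcal{H}_2$ case, there is no direct-feedthrough-style identity $LD = 0$ to zero out a residual term, and one has to track how the Lyapunov recursion $P^{(Q^*)} = A_c P^{(Q^*)} A_c^{\mathrm{T}} + G Q^* G^{\mathrm{T}}$ interacts with the rank deficiency of $\widetilde{L}$. The cleanest route, and the one I expect the authors to take, is to factor the analysis through the same two-block decomposition of $\Theta P \widetilde{L}^{\mathrm{T}} + \Upsilon R L^{\mathrm{T}}$ used in Theorem~\ref{ALS_solution}(i) and verify that the remaining equations can still be solved by a suitable $Q^*$.
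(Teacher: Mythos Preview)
Your proposal does far more work than the paper. The paper's proof of Corollary~\ref{Q_unknown_ALS_extension} is two sentences: it asserts, citing Theorem~\ref{ALS_solution}, that $\mathrm{rank}(L)=p$ is a necessary condition for the full column rankness of \emph{each} of $\mathcal{H}_1$ and $\mathcal{H}_2$ individually, and then invokes Theorem~\ref{ALS_solution}(ii) (the fact you also isolate, that $LD=0$ with $\mathrm{rank}(D)=q\geq 1$ forces $\mathrm{rank}(L)<p$) to finish. No explicit null vector is ever built in the corollary's proof.

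By contrast, you are attempting a direct constructive argument for each block. For $\mathcal{H}_2$ this is essentially the $\mathrm{rank}(L)\neq p$ branch of the necessity proof of Theorem~\ref{ALS_solution}(i) with the $Q$-slot frozen at zero, but note this specialization is not automatic: taking $R^*=z_1 z_1^{\mathrm{T}}$ with $Lz_1=0$ kills $\Upsilon R^* L^{\mathrm{T}}$, yet the residual $\Theta P^{(R^*)}\widetilde{L}^{\mathrm{T}}$ need not vanish because the Lyapunov forcing $K z_1 z_1^{\mathrm{T}} K^{\mathrm{T}}$ is generically nonzero (e.g.\ $z_1\in\mathrm{col}(D)$ gives $Kz_1=BFz_1\neq 0$ since $FD=I_q$). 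Your fallback dimension count may indeed be required. For $\mathcal{H}_1$ you yourself flag the construction as ``the hard part'' and offer only conditional attempts; the invariant-subspace idea is not guaranteed to succeed, and the rank bound $(p-q)n<g^2$ can fail (take $g=1$, $p>q$).

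In short: if the goal is to match the paper, just invoke Theorem~\ref{ALS_solution} and stop. One can fairly note that Theorem~\ref{ALS_solution}(i) literally proves necessity of $\mathrm{rank}(L)=p$ only for the joint matrix $\mathcal{H}=[\mathcal{H}_1\ \mathcal{H}_2]$, not for each block separately, so the paper's short proof leans on an implicit strengthening that your more careful analysis is trying to supply; but as written your $\mathcal{H}_1$ argument is still incomplete.
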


\begin{proof}
For the ALS problems (\ref{Q_unknown_ALS}) and (\ref{R_unknown_ALS}) to have
a unique solution, it is necessary that $\mathcal{H}_{1}$ and $\mathcal{H}%
_{2}$ are of full column rank, respectively. From Theorem \ref{ALS_solution}%
, we know that $rank(L)=p$ is a necessary condition for the full column
rankness of both $\mathcal{H}_{1}$ and $\mathcal{H}_{2}$. However, as
discussed in the proof of Theorem \ref{ALS_solution}, $L$ cannot have full
column rank. In other words, $\mathcal{H}_{1}$ and $\mathcal{H}_{2}$ cannot
be of full column rank. This completes the proof.
\end{proof}

Corollary \ref{Q_unknown_ALS_extension}, together with Theorem \ref%
{ALS_solution}, provide a complete and negative answer to Problem \ref%
{problem1}. For the above-mentioned scenarios where the ALS problems are
ill-posed and the non-identifiability of noise covariances is obtained, a
natural idea is to use regularization to introduce further constraints to
uniquely determine the solution \cite{Chen2013}. However, a key question to
be answered is whether some desirable properties can be guaranteed for the
covariance estimates. A full investigation of the above questions is a
subject of our current and future work. The results in this paper can be
readily extended to the case with correlated noises. They can also be
generalized to other more complex scenarios, e.g., linear time varying
systems as in \cite{Ge2017}, although the solution uniqueness conditions of
the corresponding least-squares will be harder to analyze.

\section{\label{exten_delay}Examples}

In this section, we use some numerical examples to verify the theoretical
results. For simplicity, we take $G=I_{n}$ in the plant model (\ref{plant}).
Assume that the plant model (\ref{plant}) is has the following system
matrices%
\begin{equation*}
A=\left[
\begin{array}{ll}
1 & 1 \\
0 & 1%
\end{array}%
\right] ,\text{ }C=\left[
\begin{array}{ll}
1 & 2 \\
1 & 1%
\end{array}%
\right] ,\text{ }B=\left[
\begin{array}{l}
0 \\
1%
\end{array}%
\right] ,\text{ }D=\left[
\begin{array}{l}
1 \\
0%
\end{array}%
\right] .
\end{equation*}%
It can be easily verified that conditions (\ref{rankmatching})-(\ref%
{miniphase}) are satisfied. To design the filter in (\ref{input_estimation}%
)-(\ref{prediction}), we select%
\begin{equation*}
F=\left[
\begin{array}{ll}
1 & 0.5%
\end{array}%
\right] ,\text{ }L=\left[
\begin{array}{ll}
0 & -1 \\
0 & 0%
\end{array}%
\right] ,
\end{equation*}%
such that condition (\ref{unbiased_condition}) is met and $A_{c}$ in (\ref%
{error_dynamics}) is stable. Note that the requirements in (\ref%
{unique_condition}) are all satisfied except that $L$ is not of full column
rank (because it cannot be, as proved in Theorem \ref{ALS_solution}). Hence,
$\mathcal{H}$ in (\ref{dals}) cannot be of full column rank. To double
confirm, select $N=10$, it can be checked that $rank(\mathcal{H})=2,$ $rank(%
\mathcal{H}_{1})=1,$ $rank(\mathcal{H}_{2})=2,$ where, $\mathcal{H}\in
\mathbf{R}^{40\times 8},$ $\mathcal{H}_{1}\in \mathbf{R}^{40\times 4}$ and $%
\mathcal{H}_{2}\in \mathbf{R}^{40\times 4}$. This validates Theorem \ref%
{ALS_solution} and Corollary \ref{Q_unknown_ALS_extension}.

\section{\label{conclusion}Conclusions}

The past few decades have witnessed much progress in optimal filtering for
stochastic systems with arbitrary unknown inputs and Gaussian noises.
However, the existing works assume perfect knowledge of the noise
covariances in the filter design, which is impractical. In this paper, for
linear Gaussian systems under unknown inputs, we have investigated the
identifiability question of the process and measurement noises covariances
(i.e., $Q$ and $R$) using the correlation-based ALS method. In particular,
we have shown that the ALS problem for estimating $Q$ and/or $R$ does not
have a unique solution. The above findings reveal that the noise covariances
are in general not uniquely identifiable using the ALS approach. This not
only helps us to have a better understanding of the applicability of
existing filtering frameworks under unknown inputs (since almost all of them
require perfect knowledge of the noise covariances) but also calls for
further investigation of alternative and more viable noise covariance
methods under unknown inputs. Especially, it remains to be explored whether
the noise covariances are uniquely identifiable using other
correlation-based methods. We are also interested to use regularization for
noise covariance estimation under unknown inputs, and investigate the relevant property guarantees for the covariance estimates. The above topics are the main subject of our
current and future work.

\section{Acknowledgment}

The authors would like to thank the reviewers and Editors for their
constructive suggestions which have helped to improve the quality and
presentation of this paper significantly.

\end{document}